\newtheorem{thm}{Theorem}[section]
\newtheorem{prop}[thm]{Proposition}
\theoremstyle{definition}
\theoremstyle{remark}
\newtheorem{rem}[thm]{Remark}
\begin{document}

\title[Strict $2$-convexity]{Strict $2$-convexity of convex solutions to the quadratic Hessian equation}
\author{Connor Mooney}
\address{Department of Mathematics, UC Irvine}
\email{\tt mooneycr@math.uci.edu}

\begin{abstract}
We prove that convex viscosity solutions to the quadratic Hessian inequality
$$\sigma_2(D^2u) \geq 1$$
are strictly $2$-convex. As a consequence we obtain short proofs of smoothness and interior $C^2$ estimates for convex viscosity solutions to $\sigma_2(D^2u) = 1$, which were proven using different methods in recent works of Guan-Qiu \cite{GQ}, McGonagle-Song-Yuan \cite{MSY} and Shankar-Yuan \cite{SY2}.
\end{abstract}
\maketitle

\section{Introduction}
In this note we consider convex viscosity solutions to the quadratic Hessian inequality
\begin{equation}\label{QuadHessIneq}
\sigma_2(D^2u) \geq 1.
\end{equation}
Our main result is their strict two-convexity. That is:
\begin{thm}\label{Main}
Let $u$ be a convex viscosity solution to (\ref{QuadHessIneq}) in $\Omega \subset \mathbb{R}^n$, and let $L$ be a supporting linear function to $u$ in $\Omega$. Then
$$\text{dim}\{u = L\} \leq n-2.$$
\end{thm}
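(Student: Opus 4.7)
I would argue by contradiction. Suppose that $\dim\{u = L\} \ge n - 1$. By subtracting $L$ and applying an affine change of coordinates, we may assume $L \equiv 0$ (so $u \ge 0$) and that the convex set $\{u = 0\}$ contains the $(n-1)$-dimensional disk $D = B'_r \times \{0\}$ in the hyperplane $\{x_n = 0\}$. The driving heuristic is that for a classical $C^2$ convex solution $u$ vanishing on $D$, one has $Du(x_0) = 0$ and $u_{ij}(x_0) = 0$ for all $i, j \le n-1$ at every $x_0 \in D$; the constraint $D^2 u(x_0) \ge 0$ then forces $u_{in}(x_0) = 0$ as well, since the $2 \times 2$ principal minor involving the $i$-th and $n$-th rows and columns reduces to $-u_{in}(x_0)^2$, which must be nonnegative. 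Hence $D^2 u(x_0) = \operatorname{diag}(0, \ldots, 0, u_{nn}(x_0))$ and $\sigma_2(D^2 u(x_0)) = 0$, contradicting $\sigma_2 \ge 1$. The entire content of the theorem is to lift this observation from the smooth to the convex viscosity setting.

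For the viscosity argument, I would exhibit a smooth admissible test function $\varphi$ touching $u$ from above at a well-chosen $x_0 \in D$ with $\sigma_2(D^2 \varphi(x_0)) < 1$, directly contradicting the viscosity subsolution property. Fixing $x_0 = (x_0', 0)$ at which the one-sided normal derivative $\alpha := \lim_{t \to 0^+} u(x_0', t)/t$ vanishes, the candidate is
$$
\varphi(x) = A\, x_n^2 + \epsilon\,|x' - x_0'|^2,
$$
whose Hessian $\operatorname{diag}(2\epsilon I_{n-1},\, 2A)$ is strictly admissible (all eigenvalues positive) and satisfies
$$
\sigma_2(D^2\varphi) = 2(n-1)\epsilon\bigl[(n-2)\epsilon + 2A\bigr],
$$
arbitrarily small by taking $\epsilon \to 0$ with $A$ fixed.

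The main obstacles are twofold: (i) producing $x_0 \in D$ with $\alpha(x_0') = 0$, which I would accomplish by a sliding argument replacing $L$ with $L + \alpha_0\, x_n$, where $\alpha_0 = \min \alpha$ over a smaller subdisk (justified by the convexity and nonnegativity of $\alpha$, and the fact that the modified linear remains supporting in a neighborhood of $D$); and (ii) securing the domination $\varphi \ge u$ near $x_0$, which reduces to a quadratic-in-$x_n$ upper bound on $u$ at $x_0$. The bound in (ii) holds whenever $u(x_0', t) = O(t^2)$, but fails in the borderline subquadratic scenario $u(x_0', t) \sim t^p$ with $1 < p < 2$, in which the upper semijet $J^{2, +} u(x_0)$ is empty and no smooth $\varphi$ touches $u$ from above at $x_0$. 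I expect this subquadratic case to be the principal technical point: one would resolve it either by approaching $x_0$ along Alexandrov points where $u$ is twice differentiable and $\sigma_2(D^2 u) \ge 1$ classically (then invoking a blow-up to return to the smooth heuristic), or by appealing to an a priori $C^{1,1}$-type regularity of convex viscosity subsolutions that directly rules the subquadratic profile out.
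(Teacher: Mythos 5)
Your smooth-case heuristic is exactly the right motivation, and your normalization (subtracting a multiple of $x_n$ so that the one-sided normal derivative at a base point vanishes, i.e. $u(te_n)=o(t)$ as $t\to 0^+$) matches the paper's first reduction. You have also correctly put your finger on the genuine difficulty: when $u$ grows subquadratically in $t$ off the flat disk $D$, no smooth function touches $u$ from above at any $x_0\in D$, so the direct approach of building a test function anchored at a point of $D$ cannot work. However, neither of your proposed repairs closes the gap. Approaching $x_0$ through Alexandrov points off $D$ gives you $\sigma_2(D^2u(z_k))\geq 1$ at those points, but you have no control over $|D^2u(z_k)|$ (it may blow up as $z_k\to x_0$), so no useful limit is extracted; and convex viscosity subsolutions of $\sigma_2\geq 1$ are in general \emph{not} $C^{1,1}$ (e.g.\ $x_1^2+x_2^2+|x_3|^{3/2}$ in $\mathbb{R}^3$ is a convex viscosity subsolution which is not $C^{1,1}$ across $\{x_3=0\}$), so option (ii) is unavailable without essentially already proving the theorem.

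The paper resolves this by abandoning the attempt to touch $u$ at a point of $D$. Because $u(te_n)=o(t)$, the sublevel set $\{u<h\}$ contains a cylinder $Q_h=\{|x'|<1\}\times(0,H)$ whose height $H$ satisfies $h/H\to 0$ as $h\to 0^+$. One then slides the paraboloid
$$P_h=h|x'|^2+\frac{4h}{H^2}\left(x_n-\tfrac{H}{2}\right)^2,$$
whose vertex is in the \emph{interior} of $Q_h$, not on $D$. Since $P_h\geq h\geq u$ on $\partial Q_h$ while $P_h(He_n/2)=0\leq u$, the function $P_h-u$ attains a nonpositive interior minimum, giving a touching point somewhere in $Q_h$—its exact location is irrelevant. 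At that point $P_h$ (which is convex, hence admissible) is a valid test function, and $\sigma_2(D^2P_h)=c_1(n)h^2+c_2(n)(h/H)^2<1$ for $h$ small, contradicting the viscosity inequality. Note that the slower $u$ grows in $x_n$, the larger $H$ is relative to $h$ and the smaller $\sigma_2(D^2P_h)$ becomes; so the subquadratic behavior that blocked your pointwise-touching scheme is precisely what makes the sliding-paraboloid argument succeed. This is the missing idea in your proposal.
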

\noindent Theorem \ref{Main} is sharp in view of the example $u = x_1^2 + x_2^2,$ with $L = 0.$

Local smoothness of convex viscosity solutions to
\begin{equation}\label{QuadHessEq}
\sigma_2(D^2u) = 1
\end{equation}
follows from Theorem \ref{Main}, using the classical solvability of the Dirichlet problem \cite{CNS} and the Pogorelov-type interior $C^2$ estimate from \cite{CW} (see Section \ref{Preliminaries}). With a compactness argument we can in fact prove a universal modulus of strict $2$-convexity (see Proposition \ref{Universal}). As a result we obtain:
\begin{thm}\label{Estimate}
Let $u$ be a convex viscosity solution of (\ref{QuadHessEq}) in $B_1 \subset \mathbb{R}^n$. Then $u$ is smooth, and
\begin{equation}\label{C2Est}
|D^2u(0)| \leq C\left(n,\|u\|_{L^{\infty}(B_1)}\right).
\end{equation}
\end{thm}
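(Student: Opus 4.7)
The plan is to combine three ingredients: Theorem \ref{Main}, the classical solvability of the Dirichlet problem from \cite{CNS}, and the Pogorelov-type interior $C^2$ estimate of Chou-Wang \cite{CW} for smooth solutions, whose hypothesis is precisely a quantitative form of strict $2$-convexity along the contact set of supporting linear functions.

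The first step, which is Proposition \ref{Universal}, is to upgrade Theorem \ref{Main} to a \emph{universal} modulus of strict $2$-convexity via compactness. Concretely, for each $M > 0$ there should exist a modulus $\omega$ depending only on $n$ and $M$ such that for every convex viscosity solution $v$ of (\ref{QuadHessIneq}) in $B_1$ with $\|v\|_{L^\infty(B_1)} \leq M$, every supporting linear function $L$ to $v$ at a point $x_0 \in B_{1/2}$, and every $(n-1)$-dimensional affine subspace $H$ through $x_0$, the set $\{v = L\} \cap B_r(x_0)$ cannot be $\omega(r)$-close to $H$. Assuming the modulus failed, I would extract a sequence $v_k$ violating it. Convexity plus the uniform $L^\infty$ bound gives interior Lipschitz bounds, so after a subsequence $v_k \to v_\infty$ locally uniformly; stability of viscosity solutions makes $v_\infty$ a convex viscosity solution of (\ref{QuadHessIneq}); and upper semicontinuity of contact sets under uniform convergence of convex functions produces in the limit a supporting linear function to $v_\infty$ whose contact set has dimension at least $n-1$, contradicting Theorem \ref{Main}.

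With Proposition \ref{Universal} in hand, I would approximate. Fix $x_0 \in B_{1/2}$ and a ball $B_r(x_0) \subset\subset B_1$. Regularize $u|_{\partial B_r(x_0)}$ to smooth strictly convex data, and by \cite{CNS} solve $\sigma_2(D^2 u_\epsilon) = 1$ to obtain smooth convex solutions $u_\epsilon$ on $B_r(x_0)$. By the comparison principle each $u_\epsilon$ is uniformly bounded in $L^\infty$ by a constant depending on $\|u\|_{L^\infty(B_1)}$, so Proposition \ref{Universal} applies to every $u_\epsilon$ with a single modulus. Inserting that modulus into the Chou-Wang estimate \cite{CW} yields $|D^2 u_\epsilon(x_0)| \leq C(n, \|u\|_{L^\infty(B_1)})$. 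Stability plus uniqueness gives $u_\epsilon \to u$ uniformly on $B_{r/2}(x_0)$, so the $C^2$ bound passes to $u$ and yields (\ref{C2Est}). Once $D^2 u$ is bounded, $\sigma_2^{1/2}(D^2 u) = 1$ is uniformly elliptic and concave on the relevant cone, so Evans-Krylov plus Schauder bootstrap upgrade $u$ to $C^\infty$.

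The main obstacle is the compactness step: the modulus $\omega$ must be formulated so that it is simultaneously (a) exactly what the \cite{CW} estimate takes as hypothesis, and (b) stable enough under uniform limits of convex functions that its failure along a sequence forces a genuine $(n-1)$-dimensional piece in a contact set of the limit. Once this formulation is pinned down, the rest is routine, relying only on stability of viscosity solutions, the comparison principle for $\sigma_2$, and standard elliptic regularity.
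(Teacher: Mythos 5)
Your overall plan (compactness to get a universal modulus, approximate via the Dirichlet problem, feed the barrier into the Chou--Wang Pogorelov estimate, pass to the limit) is the same as the paper's. However, there is a genuine gap at the approximation step, and it is exactly the subtlety the paper warns about in its introduction.

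You claim that solving $\sigma_2(D^2 u_\epsilon) = 1$ with smooth strictly convex boundary data yields \emph{convex} solutions $u_\epsilon$, and then you apply Proposition \ref{Universal} to the $u_\epsilon$. But the existence theorem of \cite{CNS} produces $2$-convex solutions, not convex ones, even when the boundary data is convex; the operator $\sigma_2$ has no convexity-preserving comparison principle in $n \geq 3$. Proposition \ref{Universal} is a statement about convex solutions, and Theorem \ref{Main} (on which it rests) is false for merely $2$-convex solutions: the Pogorelov example shows that $2$-convexity alone does not rule out degenerate directions. So applying Proposition \ref{Universal} to the $u_\epsilon$ is not justified, and this is where the proposal breaks.

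The paper's way around this is to apply the strict $2$-convexity statement (Theorem \ref{Main} for qualitative smoothness, Proposition \ref{Universal} for the quantitative estimate) to the original convex $u$, not to the approximations. This produces a concrete $2$-convex barrier $w$ (a quadratic of the form $\delta\bigl(2(n-2)|y|^2 - |z|^2 + c\bigr)$) that lies above $u$ near the boundary of a fixed small domain. Because the barrier and domain are determined by $u$ alone, and the $2$-convex approximations $v_j$ from \cite{CNS} converge uniformly to $u$, one can apply Theorem \ref{Pogo} to the $v_j$ (which need not be convex --- $2$-convexity is all the Pogorelov estimate requires) and extract uniform $C^2$ bounds on a fixed interior ball. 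The convexity of $u$ is used once, to build the barrier; it is never needed for the approximations. If you reorganize your argument so that the modulus from Proposition \ref{Universal} is extracted from $u$ itself and converted into a $2$-convex barrier, and then the Chou--Wang estimate is applied to the merely $2$-convex approximations, the proof goes through.

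One secondary remark: your formulation of the universal modulus (closeness of $\{v = L\} \cap B_r$ to a hyperplane) is not quite in the shape \cite{CW} consumes. The hypothesis of Theorem \ref{Pogo} is the existence of a $2$-convex $w$ with $u < w$ inside and $u = w$ on the boundary, so the modulus is more useful stated as in the paper's Proposition \ref{Universal}: a sublevel set $\{u < L + \delta\}$ compactly contained in an $r$-neighborhood of an $(n-2)$-plane, which translates directly into a bound $u > \delta$ on $\{|y| = \rho\}$ and hence into the explicit quadratic barrier.
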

Inequality (\ref{C2Est}) was recently proven for smooth convex solutions of (\ref{QuadHessEq}) in \cite{GQ} and \cite{MSY}, and Theorem \ref{Estimate} was proven in \cite{SY2}. A subtle issue in passing to the viscosity case is that smooth approximations of convex viscosity solutions may not be convex. An advantage of our approach is that it avoids using a priori estimates for smooth convex solutions, which allows us to bypass this issue. The methods in the above-mentioned works are quite different from ours, based in \cite{GQ} on the Bernstein technique, and in \cite{MSY} and \cite{SY2} on the properties of the equation for the Legendre-Lewy transform of $u$.

An interesting question is whether the conclusion of Theorem \ref{Estimate} holds without assuming that $u$ is convex. It is true when $n = 2$ (in which case solutions are automatically convex and (\ref{QuadHessEq}) is the Monge-Amp\`{e}re equation, \cite{H}) and when $n = 3$ (in which case (\ref{QuadHessEq}) is equivalent to the special Lagrangian equation, \cite{WY}). It is also known to be true if $u$ is slightly non-convex \cite{SY2}. Finally, an interior $C^2$ estimate of the form (\ref{C2Est}) was recently obtained in \cite{SY1} for smooth solutions to (\ref{QuadHessEq}) that satisfy the semi-convexity condition $D^2u \geq -KI$, with $C$ depending also on $K$. The general case in dimension $n \geq 4$ remains open.

\begin{rem}
Local smoothness and interior $C^2$ estimates are false for convex viscosity solutions to the $k$-Hessian equation 
$$\sigma_k(D^2u) = 1$$ 
when $k \geq 3$, in view of the well-known Pogorelov example (\cite{P}, \cite{U}). The same example shows that convex viscosity solutions to $\sigma_k(D^2u) \geq 1$ are not always strictly $k$-convex when $k \geq 3$. In particular, Theorems \ref{Main} and \ref{Estimate} are both special to the quadratic Hessian equation.
\end{rem}

The paper is organized as follows. In Section \ref{Preliminaries} we recall a few classical results about the $k$-Hessian equation, and we use them to show that Theorem \ref{Main} implies that convex viscosity solutions of (\ref{QuadHessEq}) are smooth. In Section \ref{ProofMain} we prove Theorem \ref{Main}. Finally, in Section \ref{ProofEstimate} we prove a quantitative version of Theorem \ref{Main} using a compactness argument, and we use it to complete the proof of Theorem \ref{Estimate}.

\section*{Acknowledgments}
The author is grateful to Ravi Shankar and Yu Yuan for comments. This research was supported by NSF grant DMS-1854788.

\section{Preliminaries}\label{Preliminaries}
In this section we recall a few classical facts about the $k$-Hessian equation. Below $\Omega$ denotes a bounded domain in $\mathbb{R}^n$, and
$1 \leq k \leq n$.

We first recall some facts about the $\sigma_k$ operator. The function $\sigma_k$ on $Sym_{n \times n}$ denotes the $k^{th}$ symmetric polynomial of the eigenvalues. It is elliptic on the cone
$$\Gamma_k := \{M \in Sym_{n \times n}: \sigma_l(M) > 0 \text{ for each } 1 \leq l \leq k\},$$
and has convex level sets in $\Gamma_k$. Furthermore, the function $\sigma_k$ is uniformly elliptic on compact subsets of $\Gamma_k$.

Next we recall the notion of viscosity solution. We say that a function $u \in C^2(\Omega)$ is $k$-convex if $D^2u \in \overline{\Gamma_k}$. Given a nonnegative function $f \in C(\Omega)$, we say that a function $u \in C(\Omega)$ is a viscosity solution of
$$\sigma_k(D^2u) \geq (\leq) \, f$$
if, whenever a $k$-convex function $\varphi \in C^2(\Omega)$ touches $u$ from above (below) at a point $x_0 \in \Omega$, we have 
$$\sigma_k(D^2\varphi(x_0)) \geq (\leq) \, f(x_0).$$ 
We say that $u \in C(\Omega)$ is a viscosity solution of 
$$\sigma_k(D^2u) = f$$ 
if it is a viscosity solution of both $\sigma_k(D^2u) \geq f$ and $\sigma_k(D^2u) \leq f$. The notions of classical and viscosity solution coincide on $C^2$ $k$-convex functions.

Third we recall the classical solvability of the Dirichlet problem for the $k$-Hessian equation, proven in \cite{CNS}:
\begin{thm}\label{DP}
Let $g \in C^{\infty}(\partial B_R)$. Then there exists a unique $k$-convex solution $u \in C^{\infty}\left(\overline{B_R}\right)$ to the Dirichlet problem
$$\sigma_k(D^2u) = 1 \text{ in } B_R, \quad u|_{\partial B_R} = g.$$
\end{thm}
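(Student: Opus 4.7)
My plan is to prove existence by the continuity method and uniqueness by the maximum principle. Uniqueness is quick: if $u, v$ are two $k$-convex $C^2$ solutions with the same boundary data, then $w = u - v$ satisfies $a^{ij} w_{ij} = 0$ with $w|_{\partial B_R} = 0$, where
\begin{equation*}
a^{ij} = \int_0^1 \sigma_k^{ij}\bigl(sD^2u + (1-s)D^2v\bigr)\,ds
\end{equation*}
is positive definite (the cone $\overline{\Gamma_k}$ being convex and $\sigma_k$ being elliptic on $\Gamma_k$), so the maximum principle gives $w \equiv 0$.

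For existence, I would take as the starting point the explicit radial $k$-convex solution $u_0(x) := \tfrac{1}{2}\binom{n}{k}^{-1/k}(|x|^2 - R^2)$, which vanishes on $\partial B_R$ and satisfies $\sigma_k(D^2 u_0) = 1$, and consider the deformation
\begin{equation*}
\sigma_k(D^2 u_t) = 1 \text{ in } B_R, \qquad u_t|_{\partial B_R} = tg, \qquad t \in [0,1].
\end{equation*}
Let $S \subset [0,1]$ denote the set of $t$ admitting a $k$-convex solution in $C^{4,\alpha}(\overline{B_R})$. Then $0 \in S$; openness of $S$ follows from the implicit function theorem, since the linearization $\sigma_k^{ij}(D^2 u_t)\partial_{ij}$ is a uniformly elliptic operator with smooth coefficients whose Dirichlet problem is invertible by Schauder theory. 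Closedness of $S$ amounts to uniform a priori $C^{2,\alpha}$ estimates.

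These estimates proceed in the standard hierarchy. A $C^0$ bound comes from comparison with $u_0$ shifted by $\|g\|_{L^\infty}$; boundary $C^1$ bounds come from linear barriers vanishing on $\partial B_R$, and combined with $k$-convexity they produce a global $C^1$ bound. The $C^2$ estimate is the heart of the argument: first establish boundary $C^2$ bounds using barriers adapted to the geometry of $\partial B_R$, then propagate them inward via the maximum principle applied to a function built from $\lambda_{\max}(D^2 u_t)$ and lower-order corrections, exploiting the concavity of $\sigma_k^{1/k}$ on $\Gamma_k$. Once $\|D^2 u_t\|_{L^\infty}$ is controlled, $D^2 u_t$ lies in a compact subset of $\Gamma_k$, the rewritten equation $\sigma_k^{1/k}(D^2 u) = 1$ is uniformly elliptic and concave, and Evans-Krylov followed by Schauder bootstrapping delivers $C^{\infty}$ regularity.

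The main obstacle will be the boundary double-normal estimate $u_{nn}|_{\partial B_R} \leq C$. Tangential-tangential second derivatives are prescribed directly by $g$, and mixed tangential-normal derivatives can be pinned down by Pogorelov-type barriers using smoothness of $\partial B_R$. The double-normal bound, however, requires an argument specific to the $\sigma_k$ structure: if $u_{nn}$ were to blow up at a boundary point, then the constraint $\sigma_k(D^2 u) = 1$ together with the prescribed tangential data would force the remaining eigenvalues toward $\partial \Gamma_k$, which can be ruled out by a barrier exploiting the positive principal curvatures of $\partial B_R$. This is the step where the convexity of the ball and the precise algebraic structure of the $k$-Hessian operator are both essential, and it is where the real work of \cite{CNS} lies.
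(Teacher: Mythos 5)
The paper does not prove this theorem; it is quoted verbatim as a classical result with the proof deferred to Caffarelli--Nirenberg--Spruck \cite{CNS}, so there is no ``paper proof'' to compare against. Your sketch is, however, a faithful outline of the continuity-method argument that \cite{CNS} actually carries out, so it is worth commenting on its accuracy.

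The uniqueness argument is correct and complete as written: since $\sigma_k(D^2u) = \sigma_k(D^2v) = 1 > 0$ forces $D^2u, D^2v \in \Gamma_k$ (open, by Newton--Maclaurin), convexity of $\Gamma_k$ keeps the segment $sD^2u + (1-s)D^2v$ in $\Gamma_k$, so the coefficient matrix $a^{ij}$ is genuinely positive definite and the maximum principle applies. The existence outline is also structurally right: the radial starting point $u_0$ does satisfy $\sigma_k(D^2u_0) = \binom{n}{k}\binom{n}{k}^{-1} = 1$ and vanishes on $\partial B_R$, openness by the implicit function theorem plus linear Schauder theory is standard, and the estimate hierarchy $C^0 \to C^1 \to C^2 \to C^{2,\alpha}$ via concavity of $\sigma_k^{1/k}$ and Evans--Krylov is the correct route.

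That said, the proposal stops short of being a proof precisely at the step you yourself flag: the boundary second-derivative estimates, and above all the double-normal bound $u_{nn}|_{\partial B_R} \le C$. The mechanism in \cite{CNS} for this step is not merely ``a barrier exploiting the positive principal curvatures'' in a vague sense; it hinges on the strict $k{-}1$-convexity of $\partial B_R$ to show that the tangential principal minor of $D^2u$ stays uniformly inside $\Gamma_{k-1}$ at boundary points, which makes $\sigma_k$ a strictly increasing affine function of $u_{nn}$ with slope bounded below, so the equation itself caps $u_{nn}$. Your description gestures at this but does not give the argument, and the mixed tangential-normal estimate is not really ``Pogorelov-type''; it comes from differentiating the equation tangentially and constructing a barrier for the resulting linear equation using the distance to $\partial B_R$. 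As an outline of the \cite{CNS} strategy your write-up is accurate and well organized, but the core analytic content of that theorem, the boundary $C^2$ estimates, is acknowledged rather than supplied, which is the right thing to say when citing \cite{CNS} but not a self-contained proof.
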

\noindent The result in fact holds for smooth bounded $k-1$-convex domains.

Finally we recall the Pogorelov-type estimate Theorem $4.1$ from \cite{CW}:
\begin{thm}\label{Pogo}
Assume that $u \in C^{\infty}\left(\overline{\Omega}\right)$ is a $k$-convex solution to
$$\sigma_k(D^2u) = 1 \text{ in } \Omega,$$
and that there exists a $k$-convex function $w \in C\left(\overline{\Omega}\right)$ such that $u < w$ in $\Omega$ and $u = w$ on $\partial \Omega$. Then
\begin{equation}\label{PogoEst}
\sup_{\Omega} \left((w-u)^4|D^2u|\right) \leq C\left(n,\,k,\, \|u\|_{C^1(\Omega)}\right).
\end{equation}
\end{thm}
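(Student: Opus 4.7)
The plan is a Pogorelov-type maximum principle argument. Consider the auxiliary function
$$H = 4\log(w-u) + \tfrac{\alpha}{2}|Du|^2 + \log \lambda_1(D^2u),$$
where $\lambda_1$ is the largest eigenvalue of $D^2u$ and $\alpha = \alpha(n,k,\|u\|_{C^1})$ will be chosen large. Because $w-u>0$ in $\Omega$ and vanishes on $\partial \Omega$, $H\to -\infty$ at the boundary, so $H$ attains an interior maximum at some $x_0\in\Omega$. Rotating coordinates, one may take $D^2u(x_0)$ diagonal with $\lambda_1\ge\lambda_2\ge\cdots\ge\lambda_n$; when $\lambda_1$ has multiplicity I would replace $\log\lambda_1$ with $\log u_{11}$, which agrees up to first order at $x_0$ and is dominated by $\log\lambda_1$ nearby. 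A preliminary step approximates $w$ by smooth $k$-convex functions (e.g.\ using Theorem~\ref{DP} on slightly smaller domains with mollified boundary data) so that $L(w-u)$ is classically defined; the final estimate passes to the limit.

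At $x_0$ one has $\nabla H=0$ and $LH\le 0$, where $L=F^{ij}\partial_{ij}$ with $F^{ij}=\sigma_k^{ij}(D^2u)$. The key identities are $F^{ij}u_{ij}=k$, $F^{ij}u_{ij\ell}=\partial_\ell \sigma_k=0$, and, from differentiating the equation twice in $e_1$ combined with concavity of $\sigma_k^{1/k}$ on $\Gamma_k$,
$$Lu_{11}\;=\;-F^{ij,rs}u_{ij1}u_{rs1}\;\ge\;0.$$
These imply $L(\tfrac12|Du|^2)=\sum_i F^{ii}\lambda_i^2$, and the same concavity inequality applied to the pair $(u,w)$ gives $L(w-u)\ge -k$ in the limit. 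Expanding $LH\le 0$ and using $\nabla H=0$ to eliminate the third-derivative term $F^{ij}(u_{11})_i(u_{11})_j/u_{11}^2$ in favor of quadratic combinations of $(w-u)_i/(w-u)$ and $u_i\lambda_i$ yields a schematic inequality
$$\alpha\sum_i F^{ii}\lambda_i^2 \;\lesssim\; \frac{1}{w-u}+\frac{F^{ij}(w-u)_i(w-u)_j}{(w-u)^2}+\alpha^2\|u\|_{C^1}^2\sum_i F^{ii}\lambda_i^2+\text{l.o.t.}$$

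Choosing $\alpha$ large (depending on $n,k,\|u\|_{C^1}$) absorbs the gradient error into the left-hand side. The remaining errors scale like $(w-u)^{-2}$, so after multiplying through by $(w-u)^2$ and invoking a Newton-MacLaurin inequality of the form $\sum F^{ii}\lambda_i^2\gtrsim_{n,k}\lambda_1$ in the regime $\lambda_1\gg\lambda_2$ (the complementary regime is handled directly since $\sigma_k=1$ bounds the smaller eigenvalues), one arrives at $(w-u)^4\lambda_1\le C(n,k,\|u\|_{C^1})$ at $x_0$, which transfers to all of $\Omega$ because $x_0$ is a global maximum.

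The main obstacle is the Newton-MacLaurin inequality for $\sum F^{ii}\lambda_i^2$ in a form strong enough to absorb the $(w-u)^{-2}$ gradient errors while permitting exponent $4$ on $w-u$. For $k=n$ (Monge-Amp\`ere) the corresponding structural inequality is much stronger and allows the classical exponent $2$; the loss to $4$ here is essentially what one gets from the weaker pointwise inequalities available for intermediate $k$. A secondary technical point is making $L(w-u)$ meaningful when $w$ is only continuous, which is handled by the approximation noted in the first paragraph together with uniform estimates that survive the limit.
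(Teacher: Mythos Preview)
The paper does not prove Theorem~\ref{Pogo}; it is quoted as Theorem~4.1 of \cite{CW} and used as a black box. Your sketch follows the standard Pogorelov-type maximum principle argument, which is indeed the method of \cite{CW}, so in spirit there is nothing to compare.

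One genuine slip in your outline: in the displayed schematic inequality you place $\alpha\sum_i F^{ii}\lambda_i^2$ on the left and $\alpha^2\|u\|_{C^1}^2\sum_i F^{ii}\lambda_i^2$ on the right, then write ``choosing $\alpha$ large \ldots\ absorbs the gradient error into the left-hand side.'' As written this is backwards: for the left to dominate that term you need $\alpha<\|u\|_{C^1}^{-2}$, i.e.\ $\alpha$ small (but positive), chosen depending on $\|u\|_{C^1}$. This is how the argument actually runs in \cite{CW}; with that correction the rest of your outline (concavity giving $L(w-u)\ge -k$, elimination of the third-order term via $\nabla H=0$, and the Newton--MacLaurin-type lower bound on $\sum_i F^{ii}\lambda_i^2$) matches the standard proof.
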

\noindent Inequality (\ref{PogoEst}) implies in particular that the equation for $u$ is uniformly elliptic on compact subdomains of $\Omega$. By the Evans-Krylov theorem (see \cite{CC}), interior derivative estimates of all higher orders follow.

To conclude the section we show local smoothness of convex viscosity solutions to (\ref{QuadHessEq}). We assume $u$ is defined in $B_1 \subset \mathbb{R}^n$ , and it suffices to prove smoothness in a neighborhood of the origin. After subtracting a supporting linear function we may assume that $u(0) = 0$ and that $u \geq 0$. By Theorem \ref{Main} we have after a rotation that $\{u = 0\}$ is contained in 
the subspace spanned by $\{e_3,\,...,\,e_n\}$. Let
$$w_{\delta}(x) := \delta[2(n-2)(x_1^2 + x_2^2) - (x_3^2 + ... + x_n^2)],$$
and notice that $w_{\delta}$ is $2$-convex for all $\delta > 0$. Furthermore, we can choose $\delta,\, \eta,\, \mu > 0$ small (depending on $u$) such that
$$u > w_{\delta} + \eta \text{ on } \partial B_{1/2} \quad \text{ and } \quad \overline{B_{\mu}} \subset \{u < w_{\delta} + \eta\}.$$
Let $\{v_j\}$ be a sequence of smooth $2$-convex (but not necessarily convex) solutions to (\ref{QuadHessEq}) that converge
uniformly to $u$ in $B_{1/2}$. (One obtains the functions $v_j$ e.g. by taking smooth approximations to $u$ on $\partial B_{1/2}$ and applying Theorem \ref{DP} with $R = 1/2$ and $k = 2$.) Applying Theorem \ref{Pogo} to $v_j$ with $w = w_{\delta} + \eta$ and $k = 2$, we see that the solutions $v_j$ enjoy uniform derivative estimates of all orders in $B_{\mu}$ as $j \rightarrow \infty$. We conclude that $u$ is smooth in $B_{\mu}$.

\section{Proof of Theorem \ref{Main}}\label{ProofMain}

In this section we prove Theorem \ref{Main}.

\begin{proof}[{\bf Proof of Theorem \ref{Main}:}]
Assume by way of contradiction that there exists a supporting linear function $L$ to $u$ such that $\text{dim}\{u = L\} \geq n-1.$
After subtracting $L$, translating, rotating, and quadratically rescaling, we may assume that $u$ is defined in $B_2$, that $u \geq 0$, and that $u = 0$ on $\{x_n = 0\} \cap B_2$. After subtracting another supporting linear function of the form $a x_n$ with $a \geq 0$, we may also assume that
$$u(te_n) = o(t) \text{ as } t \rightarrow 0^+.$$
Letting $x = (x',\,x_n)$, it follows that $\{u < h\}$ contains a cylinder of the form
$$Q_h := \{|x'| < 1\} \times (0,\,H),$$
with $h/H \rightarrow 0$ as $h \rightarrow 0^+.$ For $h$ small, the convex paraboloid 
$$P_h := h|x'|^2 + 4\frac{h}{H^2}(x_n - H/2)^2$$
thus satisfies that $P_h \geq h \geq u$ on $\partial Q_h$, that $P_h(He_n/2) = 0 \leq u,$ and that
$$\sigma_2(D^2P_h) = c_1(n)h^2 + c_2(n)\frac{h^2}{H^2} < 1,$$
which contradicts (\ref{QuadHessIneq}).
\end{proof}

\section{Proof of Theorem \ref{Estimate}}\label{ProofEstimate}
In this section we prove a quantitative version of Theorem \ref{Main}, and we use it to complete the proof of Theorem \ref{Estimate}. 
For a set $S \subset \mathbb{R}^n$ and $r > 0$ we let $S_r$ denote the $r$-neighborhood of $S$.

\begin{prop}\label{Universal}
For $K > 0,\,r > 0$ and $n \geq 2$, there exists $\delta(n,\,K,\,r) > 0$ such that if $u$ is a convex viscosity solution to (\ref{QuadHessIneq}) in $B_1 \subset \mathbb{R}^n$ with $\|u\|_{L^{\infty}(B_1)} \leq K$ and $L$ is a supporting linear function to $u$ at $0$, then
$$\{u < L + \delta\} \subset \subset T_r$$
for some $n-2$-dimensional subspace $T$ of $\mathbb{R}^n$.
\end{prop}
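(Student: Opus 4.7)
The plan is to prove Proposition \ref{Universal} by a compactness argument that reduces the claim to Theorem \ref{Main}. Suppose, for contradiction, that no such $\delta(n,K,r)$ exists. Then for some fixed $n$, $K$, $r>0$ there are sequences $\delta_j \to 0^+$, convex viscosity solutions $u_j$ to (\ref{QuadHessIneq}) in $B_1$ with $\|u_j\|_{L^{\infty}(B_1)} \leq K$, and supporting linear functions $L_j$ to $u_j$ at $0$, such that $\{u_j < L_j + \delta_j\}$ is not compactly contained in the $r$-neighborhood of any $(n-2)$-dimensional subspace.

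First I would extract a limit. Convexity together with the uniform $L^\infty$ bound makes the $u_j$ equi-Lipschitz on compact subsets of $B_1$, so after a subsequence $u_j \to u_\infty$ locally uniformly, with $u_\infty$ convex. The identities $L_j(0) = u_j(0) \in [-K, K]$ and $L_j \leq u_j \leq K$ on $B_1$ force $|\nabla L_j| \leq 2K$, so after a further subsequence $L_j \to L_\infty$ uniformly on $\overline{B_1}$. Stability of viscosity subsolutions under uniform convergence (pass test functions to the limit in the standard way, perturbing by $\epsilon|x-x_0|^2$ to preserve $2$-convexity) shows $u_\infty$ is itself a convex viscosity solution of (\ref{QuadHessIneq}), and clearly $L_\infty$ supports $u_\infty$ at $0$. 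Applying Theorem \ref{Main} to $u_\infty$ produces an $(n-2)$-dimensional linear subspace $T_\infty$ (through $0$, since $0$ lies in the contact set) containing $\{u_\infty = L_\infty\}$.

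The crux is to show that for all sufficiently large $j$ one has $\{u_j < L_j + \delta_j\} \subset (T_\infty)_{r/2}$, which is compactly contained in $(T_\infty)_r$ since the sublevel set lies in the bounded set $B_1$; this contradicts the failure hypothesis. If this were false along a subsequence, there would exist $y_j \in \{u_j < L_j + \delta_j\}$ with $\mathrm{dist}(y_j, T_\infty) \geq r/2$, and after passing to yet another subsequence $y_j \to y_\infty \in \overline{B_1}$ with $\mathrm{dist}(y_\infty, T_\infty) \geq r/2$. The main obstacle is that $y_\infty$ may lie on $\partial B_1$, where uniform convergence breaks down, so one cannot directly evaluate $u_\infty - L_\infty$ at $y_\infty$.

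I plan to resolve this by a sliding argument exploiting convexity. For any fixed $\epsilon \in (0,1)$, set $z_j := (1-\epsilon) y_j \in \overline{B_{1-\epsilon}}$. Since $u_j - L_j \geq 0$ is convex and vanishes at $0$,
$$(u_j - L_j)(z_j) \leq (1-\epsilon)(u_j - L_j)(y_j) + \epsilon (u_j - L_j)(0) < (1-\epsilon)\delta_j.$$
Because $\overline{B_{1-\epsilon}}$ is a compact subset of $B_1$ on which $u_j \to u_\infty$ and $L_j \to L_\infty$ uniformly, passing to the limit along $z_j \to (1-\epsilon) y_\infty$ gives $(u_\infty - L_\infty)\bigl((1-\epsilon) y_\infty\bigr) \leq 0$, hence $(1-\epsilon) y_\infty \in \{u_\infty = L_\infty\} \subset T_\infty$. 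But $\mathrm{dist}\bigl((1-\epsilon) y_\infty, T_\infty\bigr) = (1-\epsilon)\, \mathrm{dist}(y_\infty, T_\infty) \geq (1-\epsilon) r/2 > 0$, a contradiction. This would complete the proof; aside from the boundary subtlety handled by the dilation, every step is routine compactness and an application of Theorem \ref{Main}.
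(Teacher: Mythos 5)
Your proof is correct and uses the same compactness-plus-contradiction approach as the paper: extract a locally uniform limit $u_\infty$ with supporting linear function $L_\infty$, verify that the limit remains a convex viscosity solution, and invoke Theorem \ref{Main}. The dilation trick (replacing $y_j$ by $(1-\epsilon)y_j$ and using that $T_\infty$ passes through the origin and that $u_j - L_j$ is convex and vanishes at $0$) is a clean way to handle the boundary subtlety that the paper's proof leaves implicit when it asserts that $\{v=L\}$ is not compactly contained in any $T_r$.
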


\begin{proof}
Assume not. Then there exist convex viscosity solutions $u_j$ to (\ref{QuadHessIneq}) on $B_1$ with $\|u_j\|_{L^{\infty}(B_1)} \leq K$ and supporting linear functions $L_j$ at $0$ such that the conclusion fails with $\delta = 1/j$. Up to taking a subsequence, the functions $u_j$ converge locally uniformly to a convex viscosity solution $v$ of $(\ref{QuadHessIneq})$ in $B_1$, and $L_j$ converge to a supporting linear $L$ to $v$ at $0$ such that $\{v = L\}$ is not compactly contained in $T_r$ for any $n-2$-dimensional subspace $T$. This contradicts Theorem \ref{Main}. 
\end{proof}

\begin{proof}[{\bf Proof of Theorem \ref{Estimate}:}]
We proved that $u$ is smooth at the end of Section \ref{Preliminaries}. The proof of the estimate (\ref{C2Est}) follows the same lines.
We call a constant universal if it depends only on $n$ and $\|u\|_{L^{\infty}(B_1)}$. We may assume after subtracting a linear function with universal $C^1$ norm that $u(0) = 0$ and that $u \geq 0$. Write $x = (y,\,z)$ with $y \in \mathbb{R}^2$ and $z \in \mathbb{R}^{n-2}$. By Proposition \ref{Universal} there exists $\delta > 0$ universal such that, after a rotation, $u > \delta$ on $\{|y| = 1/(2n)\} \cap B_1$. It follows that
$$u > w := \delta\left(2(n-2)|y|^2 - |z|^2 + \frac{1}{8}\right)$$
on the boundary of $B_{3/4} \cap \{|y| < 1/(2n)\}$. Notice also that $w$ is $2$-convex. The estimate (\ref{C2Est}) follows by applying Theorem \ref{Pogo} in the connected component of the set $\{u < w\}$ that contains the origin.
\end{proof}




\begin{thebibliography}{9999}
\bibitem[CC]{CC} Caffarelli, L.; Cabr\'{e}, X. {\it Fully Nonlinear Elliptic Equations}. Colloquium Publications 43. Providence, RI: American Mathematical Society, 1995.
\bibitem[CNS]{CNS} Caffarelli, L.; Nirenberg, L.; J. Spruck, J. The Dirichlet problem for nonlinear second order elliptic equations, III: Functions of the eigenvalues of the Hessian. {\it Acta Math.} {\bf 155} (1985), 261-301.
\bibitem[CW]{CW} Chou, K.-S.; Wang, X.-J. A variational theory of the Hessian equation. {\it Comm. Pure Appl. Math.} {\bf 54} (2001), 1029-1064.
\bibitem[GQ]{GQ} Guan, P.; Qiu, G. Interior $C^2$ regularity of convex solutions to prescribing scalar curvature equations. {\it Duke Math. J.} {\bf 168} (2019), no. 9, 1641-1663.
\bibitem[H]{H} Heinz, E. On elliptic Monge-Amp\`{e}re equations and Weyl's embedding problem. {\it J. Analyse Math.} {\bf 7} (1959),1-52.
\bibitem[MSY]{MSY} McGonagle, M.; Song, C.; Yuan, Y. Hessian estimates for convex solutions to quadratic Hessian equation. {\it Ann. Inst. H. Poincar\'{e} Anal. Non Lin\'{e}aire} {\bf 36} (2019), no. 2, 451-454.
\bibitem[P]{P} Pogorelov, A. V. {\it The multidimensional Minkowski problem.} Izdat. ``Nauka," Moscow, 1975.
\bibitem[SY1]{SY1} Shankar, R.; Yuan, Y. Hessian estimate for semiconvex solutions to the sigma-2 equation. {\it Calc. Var. Partial Differential Equations}, to appear.
\bibitem[SY2]{SY2} Shankar, R.; Yuan, Y. Regularity for almost convex viscosity solutions of the sigma-2 equation. {\it J. Math. Study}, to appear.
\bibitem[U]{U} Urbas, J. On the existence of nonclassical solutions for two classes of fully nonlinear elliptic equations. {\it Indiana Univ. Math. J.} {\bf 39} (1990), no. 2, 355-382.
\bibitem[WY]{WY} Warren, M.; Yuan, Y. Hessian estimates for the sigma-2 equation in dimension three. {\it Comm. Pure Appl. Math.} {\bf 62} (2009), no. 3, 305-321.
\end{thebibliography}
\end{document}